\title{Two-generated verbally closed subgroups of a free solvable group $G$ are retracts of $G$}
\author{V.A. Roman'kov, E.I. Timoshenko}
\date{}
\newtheorem{theorem}{Theorem}[section]
\newtheorem{lemma}[theorem]{Lemma}
\theoremstyle{definition}
\newtheorem{definition}[theorem]{Definition}
\newtheorem{example}[theorem]{Example}
\newtheorem{problem}[theorem]{Problem}
\newtheorem{proposition}[theorem]{Proposition}
\newcounter{comcount}
\begin{document}

\maketitle

\begin{abstract}
We prove that every verbally closed two-generated subgroup of a free  solvable  group $G$ of a finite rank is a retract  of $G.$  
\end{abstract}


\section{Introduction}
\label{se:intro}

Algebraically closed objects play an important part in modern algebra.    In this paper we study verbally closed and algebraically closed subgroups of free solvable groups. 

We  first recall that a  subgroup $H$ is called {\it algebraically closed} in a group $G$ if  for every finite system of equations $S = \{E_i(x_1, \ldots,x_n, H) = 1 \mid i = 1, \ldots, m\}$ with constants from  $H$ the following holds: if $S$  has a solution in $G$ then it has a solution in $H$. Then we recall  that a  subgroup $H$ of a group $G$  is called a  {\em retract } of $G$, if there is a homomorphism (termed {\em  retraction}) $\phi: G \to H$ which is identical on $H$.
It is easy to show   that every retract of $G$ is algebraically closed in $G$. Furthermore, if   $G$ is finitely presented and $H$ is finitely generated then the converse is also true (see \cite{MR}).  This result still holds for any finitely generated group $G$ which is {\em equationally Noetherian}. Recall that a group $G$ is called {\em equationally Noetherian} if for any $n$ every system of equations in $n$ variables with coefficients from $G$ is equivalent (has the same solution set in $G$)  to  some finite subsystem of itself (see  \cite{BMRom}, \cite{BMR1}.

Let $F(X)$ be the free group of countably infinite  rank with basis $X=\{x_1, x_2, ...,  x_n, ... \}.$ For  $w= w(x_1, \ldots,x_n) \in F(X)$ and a group $G$ by $w[G]$ we denote the set of all $w$-elements in $G$, i.e., $w[G] = \{w(g_1, \ldots,g_n) \mid g_1, \ldots, g_n \in G\}$.  The {\em verbal subgroup} $w(G)$ is the subgroup of $G$ generated by $w[G]$.
The {\em $w$-width}  $l_{w}(g) = l_{w, G}(g)$  of an element $g \in w(G)$ is the  minimal natural number $n$ such that $g$ is a product of  $n$ $w$-elements in $G$ or their inverses; the width of $w(G)$ is the supremum of widths of its elements.    The first  question about $w$-width   goes back to the Ore's paper \cite{Ore} where he asked whether  the $[x,y]$-width (the {\em commutator} width) of every element in a non-abelian finite simple group is equal to $1$ ({\em Ore Conjecture}). The conjecture was established   in \cite{LBST}. Observing paper  \cite{Romeq} and monographs \cite{Segal},  \cite{Romess}  present results about verbal width in groups. 

Two important questions arise naturally for an extension $H \leq G$ and a  given word $w \in F(X)$:
\begin{itemize}
\item when it is true that   $w[H] = w[G] \cap H$?
\item when $l_{w, G}(h) = l_{w,H}(h)$ for a given $h \in w(H)$? 
\end{itemize} 

  To approach these questions Mysnikov and the author  introduced in \cite{MR} a new notion of verbally closed subgroups.
  \begin{definition}
\label{de:1.1}
  A subgroup $H$ of $G$ is called  {\em verbally closed} if for any word $w \in F(X)$ and element $h \in H$  equation $w(x_1, \ldots, x_n) = h$ has a solution in $G$ if and only if it has a solution in $H$, i.e., $w[H] = w[G] \cap H$ for every $w \in F(X)$.
  \end{definition} 

\medskip
\noindent
\begin{theorem}  \cite{MR}. 
\label{th:1.2}
Let $F$ be a  free group of a finite rank. Then for a subgroup $H$  of $F$ the following conditions are equivalent:
\begin{enumerate}
\item [a)] $H$ is a retract of $F$.
\item [b)] $H$ is a verbally closed  subgroup of $F$.
\item [c)] $H$ is an algebraically closed subgroup of $F$.
\end{enumerate}
\end{theorem}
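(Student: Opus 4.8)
The plan is to prove the cyclic chain of implications $(a)\Rightarrow(c)\Rightarrow(b)\Rightarrow(a)$, so that all three conditions become equivalent. Two of these links are essentially formal. The implication $(a)\Rightarrow(c)$ is the general fact, recalled in the introduction, that a retract is always algebraically closed: if $\phi\colon F\to H$ is a retraction and a system $S$ with constants in $H$ has a solution $\bar g$ in $F$, then $\phi(\bar g)$ is a solution lying in $H$. For $(c)\Rightarrow(b)$ I would observe that verbal closure is merely a very special instance of algebraic closure: the single verbal equation $w(x_1,\dots,x_n)=h$ with $h\in H$ is the one-element system $w(x_1,\dots,x_n)h^{-1}=1$ whose only constant lies in $H$; algebraic closure then forces its solvability in $F$ to imply solvability in $H$, while the converse holds trivially since $H\le F$. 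Hence the whole content of the theorem is concentrated in the remaining implication $(b)\Rightarrow(a)$.

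For $(b)\Rightarrow(a)$ the first reductions are structural. By the Nielsen--Schreier theorem $H$ is itself free, and since every free factor is visibly a retract (a free decomposition $F=H*K$ yields the retraction that kills $K$), it suffices to prove that a verbally closed subgroup $H$ is a free factor of $F$; the finite generation of $H$ then comes for free, as free factors of a free group of finite rank are finitely generated. Thus I would reformulate the goal in contrapositive form: if $H$ is \emph{not} a free factor of $F$, then $H$ is not verbally closed, i.e.\ there exist a word $w\in F(X)$ and an element $h\in H$ with $h\in w[F]\setminus w[H]$.

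The heart of the argument, and the step I expect to be the main obstacle, is the construction of such a witnessing word $w$ from the failure of the free-factor property. The guiding examples show what must be captured. If a basis element of $H$ is a proper power $u^{d}$ in $F$, then the power word $w=x^{d}$ together with $h=u^{d}$ already works, since $x^{d}=h$ is solvable in $F$ but not in the smaller group $H$; and if $H$ is abelian while some $h\in H$ is a genuine commutator $[a,b]$ in $F$, then the commutator word $w=[x,y]$ witnesses the failure, because $[x,y]=h$ is unsolvable in the abelian group $H$. The task is to produce, uniformly for an arbitrary non-free-factor $H$, a single word realizing a target $h\in H$ as a $w$-value in $F$ but not in $H$. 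Here I would exploit the combinatorial structure of how $H$ sits inside $F$ --- via Whitehead automorphisms and peak reduction, or equivalently the Stallings core graph of $H$ --- to isolate the two obstructions above (non-primitivity of the sublattice $H_{\mathrm{ab}}\le F_{\mathrm{ab}}$, handled by power words, and the non-abelian ``tangling'' of $H$, handled by commutator-type words) and to assemble them into one test word.

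Once verbal closure is shown to force $H$ to be a free factor, the retraction onto $H$ supplied by the free decomposition completes $(b)\Rightarrow(a)$ and closes the cycle. An alternative route would be to prove $(b)\Rightarrow(c)$ directly --- that verbal closure already implies full algebraic closure --- and then invoke the cited result that, for finitely generated subgroups of the equationally Noetherian group $F$, algebraic closure is equivalent to being a retract; but this demands reducing an arbitrary finite system with constants in $H$ to a single verbal equation, which relies on the same test-word technology and seems no easier than the free-factor approach above.
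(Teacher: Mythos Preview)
This theorem is quoted from \cite{MR} and is not proved in the present paper, so there is no in-paper argument to compare against. I will assess your proposal on its own terms and indicate how the original argument (whose template the present paper visibly follows in Sections~\ref{se:cyclic}--\ref{se:5}) differs.

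Your treatment of $(a)\Rightarrow(c)\Rightarrow(b)$ is correct and standard. For $(b)\Rightarrow(a)$, however, what you have written is a strategy rather than a proof. You reduce to producing, for every non-free-factor $H$, a witness word $w$ and an element $h\in H$ with $h\in w[F]\setminus w[H]$; you give two suggestive special cases (proper powers; commutators in an abelian $H$); and then you say you ``would exploit the combinatorial structure via Whitehead/Stallings'' without carrying this out. That missing construction is the entire content of the theorem, and the two special cases are very far from exhaustive. There is also a circularity in the finite-generation remark: you deduce that $H$ is finitely generated \emph{from} its being a free factor, but that is precisely what you are trying to prove; in your contrapositive framing you must in particular dispose of the infinite-rank case before any Whitehead-style argument applies.

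The proof in \cite{MR} runs in the opposite, constructive, direction and avoids the contrapositive entirely. One first shows directly that a verbally closed $H\le F$ is finitely generated, writes a generating tuple $h_1,\dots,h_m$ as words $h_i(z_1,\dots,z_r)$ in a basis of $F$, and feeds these into a single carefully chosen test word $W$ (a $C$-test word for free groups, after D.~Lee). The equation $W\bigl(h_1(\bar x),\dots,h_m(\bar x)\bigr)=W(h_1,\dots,h_m)$ then has the tautological solution $\bar x=\bar z$ in $F$; verbal closure supplies a solution $\bar x=\bar g$ in $H$; and the $C$-test property forces the endomorphism $z_i\mapsto g_i$ of $F$ (after a small correction) to restrict to the identity on $H$, i.e.\ to be a retraction. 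This is exactly the scheme the present paper adapts in Theorems~\ref{th:4.1} and~\ref{th:5.4}, with the element $(f,g,g,f)$ and the test elements of Lemmas~\ref{le:5.2}--\ref{le:5.3} playing the role of $W$. Your contrapositive route could in principle be completed, but manufacturing the witnessing $w$ in general would require the same test-word machinery, so it is not a genuine alternative.
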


\medskip
This result clarifies the nature of verbally or algebraically closed subgroups in $F$. Surprisingly, the "weak" verbal closure operator in this case is as strong as  the standard one.

Similar result is true for any free nilpotent group $N$ of a finite rank.
\begin{theorem} \cite{RH}.
\label{th:1.3}
Let  $N$ be a free nilpotent group of a finite rank and  class $c$. 
Then for a subgroup $H$  of $N$ the following conditions are equivalent:
\begin{enumerate}
\item [a)] $H$ is a retract of $N$.
\item [b)] $H$ is a verbally closed  subgroup of $N$.
\item [c)] $H$ is an algebraically closed subgroup of $N$.
\item [d)] $H$ is  a free factor of the free group $N$ in the variety ${\cal N}_c$ of all nilpotent groups of the class $\leq c.$
\end{enumerate}
\end{theorem}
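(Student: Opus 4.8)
The four conditions will be established through the cycle $d)\Rightarrow a)\Rightarrow b)\Rightarrow d)$, supplemented by $a)\Leftrightarrow c)$, which together force all four to coincide. The implication $d)\Rightarrow a)$ is immediate: a free-factor decomposition $N=H\ast K$ in $\mathcal{N}_c$ yields the retraction $N\to H$ that kills $K$. For $a)\Rightarrow b)$, if $\phi\colon N\to H$ is a retraction and $w(g_1,\dots,g_n)=h$ holds with $g_i\in N$, $h\in H$, then $w(\phi(g_1),\dots,\phi(g_n))=\phi(h)=h$ exhibits a solution in $H$. The equivalence $a)\Leftrightarrow c)$ comes essentially for free from the general theory recalled in the Introduction: a free nilpotent group $N$ of finite rank is finitely generated and equationally Noetherian (being, for instance, linear over $\MZ$; cf. \cite{BMRom}, \cite{BMR1}), so for such $N$ a finitely generated subgroup is algebraically closed if and only if it is a retract, while $a)\Rightarrow c)$ is the easy direction noted in the Introduction. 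Thus the entire burden reduces to the single hard implication $b)\Rightarrow d)$.

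To prove $b)\Rightarrow d)$, I would first extract from verbal closedness that $H$ is \emph{isolated} (root-closed) in $N$. Feeding the power words $w=x^n$ into the definition: if $h\in H$ and $h=g^n$ for some $g\in N$, then $h=h_1^n$ for some $h_1\in H$; since $N$ is torsion-free nilpotent it admits unique root extraction, whence $g=h_1\in H$. Hence $g^n\in H$ forces $g\in H$.

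Next I would pass to the abelianization $\pi\colon N\to N/N'\cong\MZ^r$ and show that $\bar H=\pi(H)$ is a direct summand, fixing a splitting $\MZ^r=\bar H\oplus\bar K$; this determines the rank $k$ of $H$ and a candidate basis. One then lifts a basis of $\bar H$ to elements $h_1,\dots,h_k\in H$ and verifies, inductively along the lower central series, that these freely generate $H$ in $\mathcal{N}_c$, while lifts of a basis of $\bar K$ supply a complementary free factor $K$, giving $N=H\ast K$. The $\mathcal{N}_c$-independence of the lifts and the absence of spurious relations in the higher quotients are checked by feeding suitably chosen commutator words $w$ into the verbal-closedness hypothesis, so that solvability of $w=h$ in $N$ forces a solution inside $H$ realizing the required basis element.

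The main obstacle is precisely this passage from the a priori weak, one-equation-at-a-time condition of verbal closedness to the global, multi-generator structural conclusion. Even the purity of $\bar H$ is delicate: the naive attempt to deduce it from the power words $w=x^n$ fails, because an element $h\in H$ with $\pi(h)=n\bar g$ need not itself be an $n$-th power in $N$, since it may differ from $g^{n}$ by a commutator whose image in $\gamma_2/\gamma_3$ is not divisible by $n$. Overcoming this requires test words sensitive to the commutator layers rather than to the abelianization alone, and the heart of the argument is the construction of such words together with the inductive control over the successive quotients $\gamma_i/\gamma_{i+1}$ needed to assemble a genuine free-factor decomposition.
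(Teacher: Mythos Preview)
The paper does not contain a proof of this theorem: Theorem~\ref{th:1.3} is quoted from \cite{RH} as a background result and is not reproved here. There is therefore nothing in the present paper to compare your attempt against; for a genuine comparison you would have to consult \cite{RH} directly.

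On the substance of your outline: the easy implications $d)\Rightarrow a)\Rightarrow b)$ are correct as written, and your appeal to Proposition~\ref{pr:2.1} for $a)\Leftrightarrow c)$ is legitimate since a free nilpotent group of finite rank is finitely presented and every subgroup of a finitely generated nilpotent group is finitely generated. You are also right that the only real content is $b)\Rightarrow d)$, and you yourself flag that your sketch for that step is incomplete: you show $H$ is isolated but do not actually establish that $\bar H$ is a summand of $N/N'$, nor that lifts of a basis of $\bar H$ freely generate a free $\mathcal{N}_c$-factor. So as it stands this is a plausible plan rather than a proof; whether it aligns with the argument in \cite{RH} cannot be judged from the present paper.
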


\section{Preliminaries}
\label{se:prelim}

In this section we collect some known or simple facts on verbally or algebraically closed subgroups.

\begin{proposition} \cite{MR}.
\label{pr:2.1} {\it Let $H \leq G$ be a group  extension. Then the following holds:
\begin{itemize}
\item[1)] If $H$ is a retract of $G$ then $H$ is algebraically closed in $G.$
\item [2)]  Suppose $G$ is finitely presented and $H$ is finitely generated. Then $H$ is algebraically closed in $G$ if and only if  $H$ is a retract.
\item [3)] Suppose $G$  is finitely generated relative to $H$ and $H$ is 
equationally Noetherian. Then $H$ is algebraically closed in $G$ if and only if  $H$ is a retract.
\end{itemize}}
\end{proposition}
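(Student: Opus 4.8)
The plan is to prove the three parts in turn, noting that parts 2) and 3) each assert an equivalence whose easy half is exactly part 1), so the real work lies in the implications ``algebraically closed $\Rightarrow$ retract''.

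For part 1), let $\phi\colon G\to H$ be a retraction and suppose a finite system $S=\{E_i(x_1,\ldots,x_n,\bar h)=1\}$ with constants $\bar h$ from $H$ has a solution $(g_1,\ldots,g_n)$ in $G$. I would apply $\phi$ to each identity $E_i(g_1,\ldots,g_n,\bar h)=1$. Since $\phi$ is a homomorphism fixing every element of $H$, it commutes with the word operations and fixes the constants, so $E_i(\phi(g_1),\ldots,\phi(g_n),\bar h)=\phi(1)=1$. As each $\phi(g_j)\in H$, the tuple $(\phi(g_1),\ldots,\phi(g_n))$ is a solution of $S$ in $H$; hence $H$ is algebraically closed.

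For part 2), assume $H$ is algebraically closed, fix a finite presentation $G=\gp{a_1,\ldots,a_m\mid r_1,\ldots,r_s}$ and a finite generating set $h_1,\ldots,h_k$ of $H$, and write each $h_i=w_i(a_1,\ldots,a_m)$. I would consider the finite system in variables $x_1,\ldots,x_m$ consisting of the relator equations $r_j(x_1,\ldots,x_m)=1$ together with the coefficient equations $w_i(x_1,\ldots,x_m)=h_i$ (constants $h_i\in H$). Substituting $x_t=a_t$ shows this system has a solution in $G$, so by algebraic closedness it has a solution $x_t=g_t\in H$. Because the $g_t$ satisfy the defining relators, the assignment $a_t\mapsto g_t$ extends to a well-defined homomorphism $\phi\colon G\to H$; the coefficient equations force $\phi(h_i)=h_i$, so $\phi$ fixes the generators of $H$ and is therefore a retraction. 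Finite presentability of $G$ is what keeps the relator part of the system finite, and finite generation of $H$ keeps the coefficient part finite.

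For part 3), I would replace the finite presentation by the free product $H*F$ with $F$ free on the finitely many elements $a_1,\ldots,a_m$ that generate $G$ together with $H$, and let $N$ be the kernel of the natural surjection $H*F\to G$. A retraction corresponds to a choice of images $x_t\mapsto g_t\in H$ killing every relator in $N$, so I would form the (in general infinite) system $S=\{r(x_1,\ldots,x_m)=1\mid r\in N\}$ of equations with coefficients from $H$, which has the solution $x_t=a_t$ in $G$. The key step is to invoke that $H$ is equationally Noetherian: $S$ is equivalent over $H$ to a finite subsystem $S_0\subseteq S$. This $S_0$ still has the solution $x_t=a_t$ in $G$, so algebraic closedness yields a solution $x_t=g_t$ in $H$; since $S_0$ and $S$ have the same solutions in $H$, the tuple $(g_1,\ldots,g_m)$ satisfies all of $S$, i.e.\ kills every relator of $G$. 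Hence $a_t\mapsto g_t$, extended by the identity on $H$, defines a retraction $G\to H$. The main obstacle is precisely the bookkeeping here: equational Noetherianity speaks about solution sets inside $H$, whereas algebraic closedness transfers $G$-solutions to $H$-solutions of a finite system, and the argument must thread these together in the right order — first reduce $S$ to the finite $S_0$ by Noetherianity, then apply algebraic closedness to $S_0$ to land in $H$, and finally use the $H$-equivalence $S_0\equiv S$ to upgrade the $S_0$-solution to a solution of the whole relator system, which is exactly what makes the resulting map well defined.
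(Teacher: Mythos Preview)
The paper does not supply its own proof of this proposition: it is quoted as a known result from \cite{MR} and used as a black box. Your argument is correct and is precisely the standard one (and, up to phrasing, the one given in \cite{MR}): push a $G$-solution through the retraction for part~1); encode a retraction as a finite system built from the relators of $G$ and word-expressions for generators of $H$ for part~2); and for part~3) pass to the relator system coming from $H*F\twoheadrightarrow G$, cut it down to a finite subsystem by equational Noetherianity of $H$, solve that in $H$ via algebraic closedness, and then use the $H$-equivalence to recover a solution of the full system. Your ordering of the last three moves is exactly the delicate point, and you have it right.
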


Further in the paper $M_r$ denotes a free metabelian group of rank $r$ and $M_{rk}$ denotes a free metabelian nilpotent of class $k$ group of rank $r.$ 

The following statement was proved in \cite{Romeqmet} (see also \cite{Romess}). 

\begin{lemma}
\label{le:2.2}
 Let $G$ denotes  $M_2$ or $M_{rk}, k \geq 4,$  with basis $z_1, z_2$. Let tuple $(g_1, g_2)$ is a solution of the equation 

 \begin{equation}
\label{eq:1}
              (x_2,x_1,x_1,x_2) = (z_2,z_1,z_1,z_2).                                                    
\end{equation}
Then  $ g_i \equiv  z_i^{\pm 1} (mod\ G'), i = 1, 2.$
\end{lemma}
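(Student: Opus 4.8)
The plan is to work in the associated graded Lie ring of the lower central series and to read off the required congruences already from the degree-$4$ component. Write $\gamma_m=\gamma_m(G)$, set $L=\bigoplus_{m\ge 1}\gamma_m/\gamma_{m+1}$, and let $\bar h\in L_1=G/\gamma_2=G/G'$ denote the image of $h\in G$. Since $G'=\gamma_2$, the assertion $g_i\equiv z_i^{\pm1}\pmod{G'}$ is literally the statement $\bar g_i=\pm\bar z_i$ in $L_1$, so it suffices to determine $\bar g_1,\bar g_2$. I will use the standard fact that, for $a\in\gamma_p$ and $b\in\gamma_q$, the class of $[a,b]$ in $\gamma_{p+q}/\gamma_{p+q+1}$ is the Lie bracket of the classes of $a$ and $b$; iterating this identifies the image modulo $\gamma_5$ of the left-normed commutator $w(g_1,g_2)=[[[g_2,g_1],g_1],g_2]$ with the Lie element $[[[\bar g_2,\bar g_1],\bar g_1],\bar g_2]\in L_4$, whenever $\bar g_1,\bar g_2\in L_1$ are its leading terms.

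First I would dispose of the degenerate cases. If $\bar g_1=0$ or $\bar g_2=0$ (that is, $g_1\in G'$ or $g_2\in G'$), a weight count shows $w(g_1,g_2)\in\gamma_5$, so its image in $\gamma_4/\gamma_5$ vanishes. On the other hand the right-hand side $v=[[[z_2,z_1],z_1],z_2]$ has image $[[[\bar z_2,\bar z_1],\bar z_1],\bar z_2]\in L_4$, which is a nonzero basic commutator precisely because $k\ge 4$ (resp.\ because $M_2$ carries no nilpotency bound); this is where the hypothesis $k\ge 4$ enters. Hence $\bar g_1\ne 0\ne\bar g_2$, and equating the images of the two sides of \eqref{eq:1} in $L_4$ gives the Lie identity
\[
[[[\bar g_2,\bar g_1],\bar g_1],\bar g_2]=[[[\bar z_2,\bar z_1],\bar z_1],\bar z_2].
\]

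Next I would exploit metabelianness at the Lie level. Because $[L',L']=0$, Jacobi gives $[[c,x],y]=[[c,y],x]$ for all $c\in L'$ and $x,y\in L_1$, so the operators $\mathrm{ad}_x$ commute on $L'$ and $L'$ becomes a module over the symmetric algebra $\mathrm{Sym}(L_1)$. Writing $x=\bar g_1$, $y=\bar g_2$, this turns the left-hand bracket into the ``monomial times commutator'' expression $(xy)\cdot[y,x]$, and the right-hand side into $(e_1e_2)\cdot[e_2,e_1]$, where $e_i=\bar z_i$. In the rank-two situation (to which the case ``basis $z_1,z_2$'' reduces) one has $L_4\cong\mathrm{Sym}^2(L_1)$ via $f\mapsto f\cdot[e_1,e_2]$, so with $\bar g_1=s_1e_1+s_2e_2$, $\bar g_2=t_1e_1+t_2e_2$ and $D=t_1s_2-t_2s_1$ the identity becomes the single equation $D\,(\bar g_1\bar g_2)=-e_1e_2$ in $\mathrm{Sym}^2(L_1)$. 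Comparing coefficients of $e_1^2,\ e_1e_2,\ e_2^2$ yields the integral system $Ds_1t_1=0$, $D(s_1t_2+s_2t_1)=-1$, $Ds_2t_2=0$, whose only solutions are $s=(\pm1,0)$, $t=(0,\pm1)$, i.e.\ $\bar g_1=\pm e_1=\pm\bar z_1$ and $\bar g_2=\pm e_2=\pm\bar z_2$. This is exactly the desired conclusion; the ``swapped'' possibility $\bar g_1=\pm\bar z_2$, $\bar g_2=\pm\bar z_1$ is automatically excluded, since it forces $D(s_1t_2+s_2t_1)=(s_2t_1)^2\ge 0$, incompatible with the value $-1$.

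The main obstacle I anticipate is bookkeeping rather than conceptual. Establishing the module identity $[[[\bar g_2,\bar g_1],\bar g_1],\bar g_2]=(\bar g_1\bar g_2)\cdot[\bar g_2,\bar g_1]$ together with the structure of $L_4$ (in particular the independence of $e_1^2[e_1,e_2]$, $e_1e_2[e_1,e_2]$, $e_2^2[e_1,e_2]$) must be done carefully; for rank $r\ge 3$ one must additionally verify that no basis elements of $L_4$ involving a third generator occur, which forces $\bar g_1,\bar g_2$ to be supported on $e_1,e_2$ and reduces matters to the rank-two computation, but here the Jacobi (Chen) relations in $L_4$ require attention. As an alternative route I would set up the Magnus embedding of the free metabelian group, realizing $G'$ as the $\mathbb{Z}[G/G']$-module of Fox-derivative vectors; there the equation reads $(\alpha_2-1)(\alpha_1-1)F=(a_2-1)(a_1-1)$ in the Laurent ring $\mathbb{Z}[a_1^{\pm1},a_2^{\pm1}]$, and a unique-factorization argument pins $\{\alpha_1-1,\alpha_2-1\}$ down to $\{a_1-1,a_2-1\}$, whereas excluding the swap then needs the extra augmentation invariant coming from $\gamma_2/\gamma_3$ — precisely the information the graded approach supplies for free.
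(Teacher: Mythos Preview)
Your argument is correct. Working in the associated graded Lie ring and reducing the equation to the polynomial identity $D\,(\bar g_1\bar g_2)=-e_1e_2$ in $\mathrm{Sym}^2(L_1)$ is a clean way to pin down $\bar g_i=\pm\bar z_i$; the case split on $s_1t_1=0=s_2t_2$ is handled properly, and your exclusion of the swap via $D(s_1t_2+s_2t_1)=(s_2t_1)^2\ge0\ne-1$ is the right observation. Two small remarks: the separate treatment of the degenerate cases $\bar g_i=0$ is redundant (it is already subsumed by $D=0$ in the polynomial identity), and the digression on rank $r\ge3$ is unnecessary since the hypothesis ``basis $z_1,z_2$'' forces rank $2$.

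As for comparison with the paper: there is nothing to compare against. The paper does not prove Lemma~\ref{le:2.2}; it merely quotes it from \cite{Romeqmet} (Roman'kov, \emph{Equations in free metabelian groups}, 1979). That source works essentially via the Magnus embedding/Fox-derivative picture you sketch as your ``alternative route'', where $G'$ is realised as a module over $\mathbb{Z}[G/G']$ and the equation becomes a factorisation problem in the Laurent polynomial ring. Your graded-Lie approach is equivalent in spirit---the symmetric algebra $\mathrm{Sym}(L_1)$ is exactly the associated graded of $\mathbb{Z}[G/G']$ with respect to the augmentation filtration---but it is arguably tidier here because passing to degree~$4$ immediately kills the higher-order terms and the swap is excluded by a sign rather than by an extra augmentation argument. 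Either route is fine; yours is self-contained and avoids invoking unique factorisation in the Laurent ring.
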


\begin{lemma} [\cite{RH}, Lemma 1.1]
\label{le:2.3} Let $ G$ be a group  and let $N$ be a verbal subgroup of $G.$ If  $H$ is a verbally closed subgroup of $G$ then its  image $H_N$ is  verbally closed in  $G_N=G/N$. 
\end{lemma}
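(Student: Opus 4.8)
The plan is to establish the nontrivial implication directly: lift a solution from the quotient $G_N$ back to $G$, absorb the resulting error term (which lies in $N$) into a single auxiliary word, and then invoke the verbal closure of $H$ in $G$. First I would dispose of the trivial direction: any solution of $w(\bar x_1,\dots,\bar x_n)=\bar h$ in $H_N$ is already a solution in $G_N$ because $H_N\le G_N$, so the whole content is to show that solvability in $G_N$ forces solvability in $H_N$.

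Since $N$ is a verbal subgroup of $G$, I would fix a set of words $V\subseteq F(X)$ with $N=V(G)$, the subgroup generated by all values $v(g_1,g_2,\dots)$ for $v\in V$ and $g_i\in G$. Take $\bar h=hN\in H_N$ with $h\in H$, and suppose $w(\bar g_1,\dots,\bar g_n)=\bar h$ in $G_N$, where $\bar g_i=g_iN$. Lifting to $G$ gives $w(g_1,\dots,g_n)=hc$ for some $c\in N$. As $c$ lies in the verbal subgroup $N=V(G)$, it is a \emph{finite} product $c=v_1(\bar a_1)^{\varepsilon_1}\cdots v_t(\bar a_t)^{\varepsilon_t}$ with $v_j\in V$, $\varepsilon_j=\pm 1$, and tuples $\bar a_j$ over $G$.

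The central step is to package this data into one equation over $H$. Using fresh variables $\bar y_1,\dots,\bar y_t$ (available because $F(X)$ has infinite rank), I would form the word
\[
W \;=\; w(x_1,\dots,x_n)\,v_t(\bar y_t)^{-\varepsilon_t}\cdots v_1(\bar y_1)^{-\varepsilon_1}.
\]
Substituting $g_i$ for $x_i$ and $\bar a_j$ for $\bar y_j$ yields $W(\dots)=(hc)c^{-1}=h\in H$, so the equation $W=h$ has a solution in $G$. By verbal closure of $H$ in $G$, it then has a solution in $H$: there exist $g_i',\bar a_j'\in H$ with $w(g_1',\dots,g_n')\,v_t(\bar a_t')^{-\varepsilon_t}\cdots v_1(\bar a_1')^{-\varepsilon_1}=h$. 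Passing to $G_N$, the crucial observation is that each $v_j(\bar a_j')$ lies in $V(H)$, and since $H\le G$ we have $V(H)\le V(G)=N$; hence the entire correction product lies in $N$ and disappears modulo $N$, giving $w(g_1'N,\dots,g_n'N)=hN=\bar h$ with every $g_i'N\in H_N$, the desired solution in $H_N$.

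I expect the delicate points to be bookkeeping rather than conceptual: ensuring $W$ is a genuine finite word (guaranteed since any single element of $N$ is a finite product of $V$-values) and verifying the inclusion $V(H)\le V(G)$ that makes the correction term vanish in the quotient. This last inclusion is exactly where the \emph{verbality} of $N$ (and not merely its normality) is used, and it is the heart of the argument, so I regard it as the main obstacle to get precisely right.
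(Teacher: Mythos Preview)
Your proof is correct and follows essentially the same approach as the paper: lift the solution to $G$, express the error term in $N$ as a value of a word that vanishes modulo $N$, append it to $w$ with fresh variables, apply verbal closure of $H$ in $G$, and project back. The only difference is cosmetic---you spell out the product $v_1(\bar a_1)^{\varepsilon_1}\cdots v_t(\bar a_t)^{\varepsilon_t}$ explicitly, whereas the paper simply writes ``let $v=v(x_{n+1},\dots,x_{n+m})$ be a corresponding to $N$ word that has value $1$ in $G/N$'' to mean the same thing.
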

\begin{proof} Suppose that equation $w(x_1, ..., x_n) = h$ ($h \in H_N$) has a solution  $(g_1, ..., g_n)$ in $ G_N$. Let $(g_1', ..., g_n')$ be a preimage of this solution in $G.$   Then $(g_1',..., g_n')$ is a solution of the equation  $w(x_1, ..., x_n) = f'$ in $G.$ Here  $f' = w(g_1',..., g_n')$ is a preimage of $ h $ in $G.$ There also is a preimage $h'$  of $h$ in $H.$ Then $ f'= h'v, v \in  N$.  Let  $v= v(x_{n+1}, ..., x_{n+m})$ be a corresponding to $N$ word  that has value 1 in $G/N.$  The equation $w(x_1, ..., x_n)v(x_{n+1},..., x_{n+m})^{-1} = h'$  is  solvable in $G$. Then it has a solution in $ H.$ The image of the $n$ first components of this solution will be a solution of $w(x_1,..., x_n) = h$ in   $H_N.$
\end{proof}

 \begin{lemma}
\label{le:2.4}
Let $H = gp(g, f)$ be a two-generated verbally closed noncyclic subgroup of a free solvable group $S_{rd} (r,d \geq 2$). Then the image $\bar{H}$   in the abelianization  $A_r = S_{rd}/S_{rd}'$  (that is a free abelian group of rank $r$) is a direct factor of rank $2$, and  $H$ is a free  solvable group of rank $2$ and class $d.$
\end{lemma}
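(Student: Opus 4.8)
The plan is to descend the derived series of $S_{rd}$ one layer at a time, starting from the abelianization. First, since $S_{rd}'$ is the verbal subgroup of the commutator word $[x_1,x_2]$, Lemma~\ref{le:2.3} gives that the image $\bar H$ is verbally closed in $A_r=S_{rd}/S_{rd}'\cong\MZ^r$. In a free abelian group the set of values of a word $w$ is exactly $n\MZ^r$, where $n$ is the greatest common divisor of the exponent sums of $w$; applying this to $w=x_1^n$ for every $n$, the verbal closure of $\bar H$ reads $n\MZ^r\cap\bar H=n\bar H$ for all $n$, i.e. $\bar H$ is a pure subgroup. A pure subgroup of a finitely generated free abelian group is a direct factor, so $\bar H$ is a direct factor of $A_r$, and since $H$ is two-generated $\mathrm{rank}\,\bar H\le 2$.

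The engine for the rest is a second consequence of verbal closure in the whole group: $H\cap S_{rd}^{(k)}=H^{(k)}$ for every $k\ge 1$. Indeed, any $h\in H\cap S_{rd}^{(k)}$ is a product of finitely many values of the $k$-th derived word $\delta_k$, hence the value in $S_{rd}$ of a single word $W=\prod_i\delta_k(\bar y_i)^{\pm1}$; as $h\in H$ and $H$ is verbally closed, the equation $W=h$ is already solvable in $H$, which puts $h\in H^{(k)}$. For $k=1$ this yields $H^{\mathrm{ab}}\cong\bar H$, so $H^{\mathrm{ab}}$ is free abelian of rank $\le 2$; in particular $\mathrm{rank}\,\bar H=0$ would force $H=H'$ and hence $H=1$, which is impossible as $H$ is noncyclic.

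I would then rule out $\mathrm{rank}\,\bar H=1$. Lifting a suitable element of $GL_r(\MZ)$ to an automorphism of $S_{rd}$ (elementary matrices lift to Nielsen automorphisms of the free group and descend to $S_{rd}$) and applying Nielsen moves to the pair $(g,f)$, I may assume $\bar g$ is primitive and $f\in H\cap S_{rd}'=H'$. In the free metabelian quotient $M_r=S_{rd}/S_{rd}''$ the derived subgroup of $\tilde H=gp(\tilde g,\tilde f)$ is generated, as a module over $\MZ[\tilde H^{\mathrm{ab}}]\cong\MZ[t^{\pm1}]$, by $[\tilde g,\tilde f]=\tilde f^{\,1-t}$; since also $\tilde f\in\tilde H'$, we obtain $\tilde f=p(t)(1-t)\tilde f$, so $\bigl(1-p(t)(1-t)\bigr)\tilde f=0$ with a nonzero coefficient (it has value $1$ at $t=1$). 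The derived module of $M_r$ is torsion-free over the domain $\MZ[A_r]$, so $\tilde f=0$; thus $H'\le S_{rd}''$, whence $H'=H\cap S_{rd}''=H''$, which for the solvable group $H$ forces $H'=1$ and makes $H$ cyclic --- a contradiction. Therefore $\mathrm{rank}\,\bar H=2$, and $\bar g,\bar f$ form a basis of the direct factor $\bar H$.

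It remains to prove $H\cong S_{2d}$, which I would do by induction on $d$ using the canonical epimorphism $\psi\colon S_{2d}\to H$, $x_1\mapsto g,\ x_2\mapsto f$. Since $S_{rd}^{(d-1)}$ is verbal, Lemma~\ref{le:2.3} makes the image $H_1$ of $H$ in $S_{r,d-1}$ verbally closed, and it is noncyclic because its abelianization is again $\MZ^2$; by induction $H_1\cong S_{2,d-1}$, so $\psi$ is an isomorphism modulo the last derived terms and $\ker\psi\le S_{2d}^{(d-1)}$. Everything thus reduces to showing that $\psi$ is injective on $S_{2d}^{(d-1)}$, mapping into $H^{(d-1)}=H\cap S_{rd}^{(d-1)}\le S_{rd}^{(d-1)}$: here the Magnus embedding presents the bottom derived term of a free solvable group as a torsion-free module over the (domain) integral group ring of the preceding quotient, and the embedding $H_1\cong S_{2,d-1}\hookrightarrow S_{r,d-1}$ induces an injection of these rings, so a map already injective on the top quotient should not collapse the bottom layer. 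The step I expect to be the main obstacle is exactly this last one --- transporting freeness from the upper quotient $S_{2,d-1}\cong H_1$ down to the bottom abelian layer, i.e. verifying that $(g,f)$ stays free all the way to the end of the derived series; the identity $H^{(k)}=H\cap S_{rd}^{(k)}$ confines the kernel to that single layer and torsion-freeness is the lever, with the rigidity of Lemma~\ref{le:2.2} (solutions of $(x_2,x_1,x_1,x_2)=(z_2,z_1,z_1,z_2)$ have images $z_i^{\pm1}$) as the natural tool for pinning the module generator $[g,f]$ should torsion-freeness not settle the bottom layer directly.
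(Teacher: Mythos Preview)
Your argument is correct and takes a genuinely different route from the paper's. The paper obtains the direct-factor statement by citing Theorem~\ref{th:1.3}, and excludes rank~$1$ by induction on~$d$: assuming the image of $H$ in $S_{r,d-1}$ is already cyclic it arranges $f\in S_{rd}^{(d-1)}$, feeds the single equation $f(x_1,\dots,x_r)=f$ into verbal closure, and computes with Fox derivatives that any solution in $H$ forces $f^{1-\delta}=1$ with $1-\delta\neq 0$, contradicting torsion-freeness of $S_{rd}^{(d-1)}$ as a module. You instead isolate the identity $H\cap S_{rd}^{(k)}=H^{(k)}$ (a clean general consequence of verbal closure that the paper never states) and pass once to the metabelian quotient $M_r$, where the relation $(1-p(t)(1-t))\tilde f=0$ yields the contradiction directly; this avoids both the induction on $d$ and the Fox calculus, at the price of needing that extra identity. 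The step you flag as the main obstacle---propagating freeness of $(g,f)$ from the abelianization down the whole derived series---is exactly Baumslag's theorem~\cite{B}, which the paper simply quotes: a subgroup of a free solvable group is free solvable on a given generating set iff that set freely generates a free abelian group modulo some term of the ambient derived series. So once you have shown that $\bar g,\bar f$ is a basis of $\bar H\cong\MZ^2$, citing~\cite{B} finishes the proof immediately; your proposed Magnus-embedding induction would amount to reproving that theorem.
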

 \begin{proof}
By Lemma \ref{le:2.3} $\bar{H}$ is verbally closed in $A_r$. By Theorem \ref{th:1.3} $\bar{H}$ is a direct factor of $A_r.$ Obviously, $\bar{H}$ is non-trivial. We need to prove that $\bar{H}$ is not cyclic. By induction on $d$, we can assume that the image of $H$ in $S_{rd-1} = S_{rd}/S_{rd}^{(d-1)}$ is generated by the image $g'$ of $g$ (so it is cyclic) and that $f  \in  S_{rd}^{(d-1)}$.   Let $f (z_1,...,  z_r)$ be an expression of $f$ in the free generators $z_1, ..., z_r$ of $S_{rd}.$ The equation $f (x_1,..., x_r) = f$  is solvable in $S_{rd}$. Hence, it has a solution $h_1,..., h_r$ in $H.$ Then we can write the components in the form  $h_i = g^{t_i} f^{\alpha_i}, t_i \in \mathbb{Z }, \alpha_i \in \mathbb{Z}[g'], i = 1, ..., x_r.$  Then  $f (h_1,..., h_r) = f (g^{t_1}, ...,  g^{t_r})f^{\delta} =  f^{\delta},$ where $\delta = \sum_{i=1}^r\alpha_id_i(f)$ and $d_i(f)$ is a value of $i$-th partial Fox  derivation  in $\mathbb{Z}[g'].$  Details about Fox derivatives see in \cite{Romess}. It is easy to compute $\delta$ directly without Fox derivatives. We can collect exponents of $f$ from the expression  $f (h_1,..., h_r).$ Note that $\delta$ belongs to the fundamental ideal of $\mathbb{Z}[g']$, because $f (x_1,..., x_r)$ is a commutator word. Then  $f^{1-\delta} = 1$ and $1- \delta \not= 0.$  This is impossible because  $S_{rd}^{(d-1)}$ has no module torsion   (see \cite{Romess}).

By the G. Baumslag's result \cite{B} $H = gp(g, f)$ is free solvable group of rank $2$ and class $d$ with  basis $g, f.$ Recall that Baumslag proved in \cite{B} the following statement:  A subgroup H of a free solvable group $S$ is itself a free solvable group if and only if there exists a set $Y$ of generators of $H$ which freely generates, modulo some term of the derived series of $S$, a free abelian group. He also gave an obvious formula of the solvability class of $H.$ See also \cite{M}. 
\end{proof}

\section{Description of verbally and algebraically  closed cyclic subgroups of  free solvable groups}
\label{se:cyclic}

We start with the case of a cyclic subgroup. 

\begin{lemma}
\label{le:cycret}
{\it Let $S$ be a free solvable group of a finite rank $r$ and let  $H = gp(h)$  be a cyclic subgroup of $S$ generated by a non-trivial element $h \in S$. Then the following conditions are equivalent:
\begin{itemize}
\item [1)] $H$ is verbally closed in $S$;
\item [2)] $H$ is a retract of $S$;
\item [3)] the image of $h$ in the abelianization $A_r=S/S'$ (that is free abelian group  of rank $r$)  is primitive. 
\end{itemize}}
\end{lemma}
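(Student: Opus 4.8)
The plan is to prove the cycle of implications $2) \Rightarrow 1) \Rightarrow 3) \Rightarrow 2)$. The implication $2) \Rightarrow 1)$ is immediate from the general theory recalled in the introduction: every retract is algebraically closed, and algebraically closed subgroups are in particular verbally closed, so nothing new is needed here.

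For the implication $1) \Rightarrow 3)$, the plan is to pass to the abelianization $A_r = S/S'$ and invoke Lemma~\ref{le:2.3}: since $S'$ is a verbal subgroup of $S$, the image $\bar H = gp(\bar h)$ is verbally closed in the free abelian group $A_r$. A verbally closed cyclic subgroup of a free abelian group must be generated by a primitive element --- this is the abelian shadow of Theorem~\ref{th:1.3} (or can be seen directly by testing the equations $x^n = \bar h$ coming from power words, which forces $\bar h$ to be non-divisible, i.e.\ primitive). This gives condition 3).

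For the main implication $3) \Rightarrow 2)$, I would argue as follows. Suppose the image of $h$ in $A_r$ is primitive. Then I can extend it to a basis of $A_r$ and, lifting, produce a basis $z_1, \ldots, z_r$ of $S$ in which $z_1 \equiv h \pmod{S'}$. The element $h$ itself then has the form $h = z_1 c$ for some $c \in S'$. The goal is to construct a retraction $\phi: S \to H = gp(h)$ with $\phi(h) = h$. Since $S$ is free solvable (a relatively free group in the variety of solvable groups of the appropriate class), a homomorphism out of $S$ is completely determined by the images of the free generators, with no relations to check beyond the defining identities of the variety, which are automatically satisfied. Thus it suffices to define $\phi$ on the basis: I would send $z_1 \mapsto h$ and $z_2, \ldots, z_r \mapsto 1$. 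This determines a homomorphism $\phi: S \to H$, and I must verify it fixes $h$.

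The hard part, and the step I expect to be the main obstacle, is precisely this verification that $\phi(h) = h$: one must check that the chosen retraction acts trivially on the correction term $c \in S'$. Under $\phi$ we have $\phi(z_1) = h$ and $\phi(z_i) = 1$ for $i \geq 2$, so $\phi(h) = \phi(z_1)\phi(c) = h \cdot \phi(c)$, and the claim reduces to showing $\phi(c) = 1$. A cleaner route that avoids computing $\phi(c)$ directly is to instead define $\phi$ as the composite of the abelianization-compatible map with a careful choice: extend $h = z_1 c$ to a \emph{new} free basis of $S$ by replacing $z_1$ with $h$ itself (legitimate, since $h, z_2, \ldots, z_r$ still map to a basis of $A_r$, so by Baumslag's criterion used in Lemma~\ref{le:2.4} they freely generate $S$). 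With respect to this new basis, define $\phi(h) = h$ and $\phi(z_i) = 1$ for $i \geq 2$; now $\phi$ is identity on $H$ by construction, so $H$ is a retract. The only point requiring care is confirming that $h, z_2, \ldots, z_r$ is genuinely a free basis of $S$, which follows from primitivity of $\bar h$ together with the Baumslag result quoted in Lemma~\ref{le:2.4}.
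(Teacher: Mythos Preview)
Your overall cycle $2)\Rightarrow 1)\Rightarrow 3)\Rightarrow 2)$ matches the paper, but two steps need repair.

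\medskip
\textbf{The case $h\in S'$ in $1)\Rightarrow 3)$.} Passing to $A_r$ via Lemma~\ref{le:2.3} does not rule out $\bar h=1$: the trivial subgroup of $A_r$ is a retract and is verbally closed, so neither Theorem~\ref{th:1.3} nor the power test $x^n=\bar h$ gives a contradiction. You must treat $h\in S'\setminus\{1\}$ separately. The paper does this directly in $S$: write $h=h'(z_1,\ldots,z_r)$ with $h'$ a commutator word, and observe that the equation $h'(x_1,\ldots,x_r)=h$ has the solution $x_i=z_i$ in $S$ but no solution in $H$, since $H$ is abelian and hence $h'(h_1,\ldots,h_r)=1\neq h$ for all $h_i\in H$. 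Once $\bar h\neq 0$, your reduction to $A_r$ is fine and agrees with the paper's $\gcd$ argument.

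\medskip
\textbf{The retraction in $3)\Rightarrow 2)$.} Your first construction already works, and for a reason you overlooked: $H=\langle h\rangle$ is cyclic, hence abelian, so any homomorphism $\phi:S\to H$ kills $S'$. Thus with $\phi(z_1)=h$, $\phi(z_i)=1$ for $i\geq 2$, you get $\phi(c)\in\phi(S')\subseteq [H,H]=1$ automatically, and $\phi(h)=h$. This is exactly the paper's argument (the paper writes it without first changing basis, using integers $l_i$ with $\sum k_il_i=1$ and $\phi(z_i)=h^{l_i}$; your choice is the special case $l_1=1$, $l_i=0$). There is no ``hard part.''

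Your ``cleaner route,'' by contrast, is not justified as stated: Baumslag's theorem quoted in Lemma~\ref{le:2.4} says only that $\langle h,z_2,\ldots,z_r\rangle$ is a free solvable group of rank $r$ and class $d$, \emph{not} that it equals $S$. Whether every IA-endomorphism of $S_{rd}$ is surjective is a separate (and nontrivial) question, and in any case unnecessary here. Drop that detour and keep your first map.
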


\begin{proof}
Let $\{z_1, ..., z_r\}$ be a basis of $S$.
The element $h$ can be expressed uniquely in the form
\begin{equation}
\label{eq:elfr} 
h = z_1^{k_1} ... z_r^{k_r}h'(z_1, \ldots , z_r), 
\end{equation}
where $ k_1, ..., k_r \in \mathbb{Z}$ and $ h'(z_1, \ldots, z_r)$ is a product of commutators of  words in $z_1, \ldots, z_r$ (a commutator word).  Then $h'\in S'.$ 

To show that 1) $\rightarrow$ 3) assume that $h$   has a non primitive image  in $ A_r$, i.e.,   either $h \in S'$   or $gcd(k_1, ..., k_n) = d > 1.$

We first suppose  that  $h \in S'$, so $ k_1 = \ldots =  k_r = 0$.  Replacing  each $z_i$ by  a  variable $x_i$ in (\ref{eq:elfr}) one gets an equation $h = x_1^{k_1} ... x_r^{k_r}h'(x_1, \ldots,x_r)$, with $h$ as a constant from $H$,  which has a solution in $S$.  However,  this equation does not have a solution in $H$, since $H$ is abelian, so $h'(h_1, \ldots,h_r) = 1$ for any $h_1, \ldots h_r \in H$. This shows that $H$ is not verbally closed in $S$ - contradiction. So $h \not\in S'$.  Then in this case  $gcd(k_1, ..., k_r) = d > 1.$ The equation 
 $$h = x_1^{k_1} ... x_r^{k_r}h'(x_1, \ldots,x_r)$$
  still has a solution in $S$, but for any $h_1, \ldots h_r \in H$ one has 
  $$h_1^{k_1} ... h_r^{k_r}h'(h_1, \ldots,h_r) = h_1^{k_1} ... h_r^{k_r}  = h^{ds} \neq h,$$
  for some $s \in \mathbb{Z}$. Hence, the equation does not have a solution in $H$, so $H$ is not verbally closed - contradiction.  

  To show that 3) $\rightarrow$ 2) assume that $h$
is  primitive in the abelianization of $S$. Then  there are integers $l_1, ..., l_r$ such that $k_1l_1 + ... k_rl_r = 1.$ Now we  define a homomorphism  $\varphi : S \rightarrow H = gp(h)$ by putting $\varphi (z_i) = h^{l_i}$ for $i = 1, ..., r.$ 
Since $H$ is abelian $\varphi(h') = 1$, so $\varphi (h) = h$ and $\varphi$ is a retraction. Hence $H$ is a retract, as claimed.

 2) $\rightarrow$ 1)   follows from Proposition \ref{pr:2.1}, statement 1). 
\end{proof}

\section{Description of verbally and algebraically  closed two-generated subgroups of  free metabelian groups}
\label{se:cyclic}

Further in the Section, $z_1, ..., z_r$ is denoted a basis of group $G$ that is $M_r$ or $M_{rk}.$ 

\begin{theorem}
\label{th:4.1}
Let $H = gp(g, f)$ be a two-generated subgroup of $G$ that is $M_r$ or $M_{rk}$, $r\geq 2, k \geq 4.$ Then  the following conditions are equivalent:
\begin{enumerate}
\item [1)] $H$ is a retract of $G$.
\item [2)] $H$ is a algebraically closed  subgroup of $Gr$.
\item [3)] $H$ is an verbally closed subgroup of $G$.
\end{enumerate}
\end{theorem}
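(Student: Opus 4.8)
The three conditions will be linked in the cycle $1)\to 2)\to 3)\to 1)$. The implication $1)\to 2)$ is Proposition \ref{pr:2.1}, statement 1), and $2)\to 3)$ is immediate, since an equation $w(x_1,\dots,x_n)=h$ with $h\in H$ is a single equation with constants from $H$, so verbal closure is just the special case of algebraic closure for one-equation systems. Thus the whole theorem reduces to the implication $3)\to 1)$: \emph{every verbally closed two-generated subgroup $H=gp(g,f)$ of $G$ is a retract of $G$}. If $H$ is cyclic this is the content of Lemma \ref{le:cycret} when $G=M_r$; for $G=M_{rk}$ the same argument proves it, since that proof uses only that a cyclic subgroup is abelian and that the abelianization of $G$ is free abelian of rank $r$. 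So from now on I would assume $H$ noncyclic.

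First I would fix the structure of $H$. By Lemma \ref{le:2.3} the image $\bar H$ is verbally closed in the abelianization $A_r=G/G'$, so by Theorem \ref{th:1.3} it is a direct factor of $A_r$; the module-theoretic torsion argument of Lemma \ref{le:2.4} shows $\bar H$ is noncyclic, hence a rank-$2$ direct factor, and that $H$ is free of rank $2$ in the ambient variety, i.e. $H\cong M_2$ when $G=M_r$ and $H\cong M_{2k}$ when $G=M_{rk}$ (for $M_r$ this is the Baumslag theorem invoked in Lemma \ref{le:2.4}; for $M_{rk}$ one argues analogously, the hypothesis $k\geq 4$ guaranteeing the absence of the relevant module torsion). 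In particular Lemma \ref{le:2.2} becomes available for $H$ with basis $g,f$, and $H$ is Hopfian.

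The heart of the argument is to convert verbal closure, which concerns single equations $w(\bar x)=h$, into the existence of a retraction. Write $g=G(z_1,\dots,z_r)$ and $f=F(z_1,\dots,z_r)$ in a basis $z_1,\dots,z_r$ of $G$, put $c=(f,g,g,f)\in H$, and consider the single word equation
\begin{equation}
\label{eq:testW}
(F(\bar x),G(\bar x),G(\bar x),F(\bar x))=c
\end{equation}
in the variables $\bar x=(x_1,\dots,x_r)$. It is solvable in $G$ by $\bar x=\bar z$, so by verbal closure it has a solution $\bar h=(h_1,\dots,h_r)$ in $H$. Setting $g'=G(\bar h)$ and $f'=F(\bar h)$, equation \eqref{eq:testW} says exactly that $(g',f')$ solves $(x_2,x_1,x_1,x_2)=(f,g,g,f)$ in $H$, so Lemma \ref{le:2.2}, applied to $H$ with basis $g,f$, forces $g'\equiv g^{\pm 1}$ and $f'\equiv f^{\pm 1}\pmod{H'}$. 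Now the assignment $z_i\mapsto h_i$ extends to a homomorphism $\rho\colon G\to H$ (both groups lie in the same variety), and its restriction to $H$ sends $g\mapsto g'$, $f\mapsto f'$; this induces an automorphism of the abelianization $\bar H$, hence $\rho|_H$ is onto, and since $H$ is Hopfian it is an automorphism $\alpha$ of $H$. Then $\alpha^{-1}\rho\colon G\to H$ is identical on $H$, i.e. a retraction, which proves $3)\to 1)$.

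The main obstacle is precisely the passage just described. Algebraic closure would let one solve the two-equation system $\{G(\bar x)=g,\ F(\bar x)=f\}$ directly and read off a retraction from any solution, but verbal closure supplies only single equations. The device overcoming this is to pack both generators into the one commutator word of Lemma \ref{le:2.2}, whose rigidity then guarantees that the homomorphism returned by verbal closure differs from a genuine retraction only by an automorphism of $H$. A secondary point demanding care is establishing the $M_2$ (respectively $M_{2k}$) structure of $H$ — and thereby its Hopficity and the applicability of Lemma \ref{le:2.2} — in the nilpotent case $G=M_{rk}$, where Baumslag's theorem is not directly at hand and the rank-$2$ freeness must be reproved using $k\geq 4$.
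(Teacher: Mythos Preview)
Your overall strategy---pack both generators into the single commutator equation of Lemma~\ref{le:2.2}, solve it in $H$ by verbal closure, and build a retraction from the resulting homomorphism $\rho\colon G\to H$---is exactly the paper's. But there is a genuine gap in the step where you pass from ``$\rho|_H$ induces an automorphism on $\bar H$'' to ``$\rho|_H$ is onto''.

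That implication is \emph{false} for $H\cong M_2$. Take the IA-endomorphism $\psi$ of $M_2=\gp{x,y}$ given by $x\mapsto x$, $y\mapsto y[x,y]$. Writing $M_2'$ additively as the free rank-one $\mathbb Z[\mathbb Z^2]$-module $\mathbb Z[x^{\pm1},y^{\pm1}]\cdot e$ with $e=[x,y]$, one computes $\psi(e)=[x,\,y[x,y]]=(2-x)e$; since $2-x$ is not a unit, the image of $\psi$ misses $e$, so $\psi$ is not surjective. (Your inference \emph{is} correct for $H\cong M_{2k}$, since in finitely generated nilpotent groups an endomorphism inducing an automorphism on the abelianization is an automorphism.)

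The paper supplies the missing ingredient: Timoshenko's theorem \cite{Timmet} that every nontrivial element of $M_2'$ is a test element of $M_2$. Your $\rho|_H$ fixes $c=(f,g,g,f)$, a nontrivial element of $H'$, so this theorem gives at once that $\rho|_H$ is an automorphism $\alpha$. With that one citation added your argument is complete, and your endgame $\alpha^{-1}\rho$ is in fact tidier than the paper's, which goes on to invoke Bachmuth's theorem to show $\alpha$ (or $\alpha^2$) is inner and then conjugates the $h_i$ by the resulting element. For $G=M_{rk}$ the paper avoids the equation altogether: once $\bar g,\bar f$ form part of a basis of $A_r$, the pair $g,f$ generates a free factor of $M_{rk}$ in the ambient nilpotent variety, which is automatically a retract---so the delicate ``$H\cong M_{2k}$'' step you flag is not actually needed there.
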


\begin{proof} The implications  1) $\rightarrow$ 2)  $\rightarrow$ 3) are obvious. We are to prove that 3) $\rightarrow$ 1). By Lemma \ref{le:2.3} the image of $H$ in the abelianization 
$A_r = G/G'$ is verbally closed. By Lemma \ref{le:2.4} this image is a direct factor of $A_r$ of rank 2. We can assume that $g = uz_1, u ϵ G'$ and $f = vz_2, v ϵ G'$.   Let 
$ g (z_1,..., z_r)$ and  $f (z_1, ..., z_r)$ be expressions of $g$  and  $f$ respectively.  Then an equation 
       
\begin{equation}
\label{eq:3}
(f (x_1,..., x_r), g (x_1,..., x_r), g (x_1,..., x_r), f (x_1,..., x_r)) = (f, g, g, f)
\end{equation}                       
\noindent
has a solution in $G.$ Thus it has a solution in $H.$ It means that there are elements  $h_1,..., h_r $ in $ H$ such that

\begin{equation}
\label{eq:4}
(f (h_1,..., h_r), g (h_1,..., ,h_r), g (h_1,..., ,h_r), f (h, ..., h_r)) = (f, g, g, f).                             
\end{equation}

Now we consider the case $G = M_r.$ 
By the Baumslag's theorem (see above)  (\ref{eq:4}) is valid in the free metabelian group $H \simeq M_2$. By Lemma \ref{le:2.2}
 \begin{equation}
\label{eq:5}
f (h_1,..., h_r) \equiv f^{\epsilon}(mod\ H'),  g (h_1,..., h_r) \equiv g^{\eta}(mod\  H')\  {\textrm where}\  \epsilon , \eta \in \{\pm 1\}. 
\end{equation}

Recall that an element $u$ of a group $G$ is said to be {\em test element} if any endomorphism $\alpha : G \rightarrow G$  for which  $\alpha (g) = g$ is an automorphism of $G.$ Timoshenko proved in \cite{Timmet} that every non-trivial element of $M_2'$ is a test element for $M_2.$ Hence the following map 
\begin{equation}
\label{eq:6}                           
g \mapsto g (h_1,..., h_r), f  \mapsto  f (h_1,..., h_r)                                                        
\end{equation}
\noindent
defines an automorphism of $H.$ Let $\epsilon , \eta = 1.$ Then this automorphism is identical  $(mod\ H').$ By Bachmuth's theorem \cite{Bachmuth} it is an inner automorphism (see also \cite{Romnormal}).  Then there is an element $v \in  H$ for which  
\begin{equation}
\label{eq:7}
                            g (h_1,..., h_r) = g^v,   f (h_1,..., h_r) = f^v.                                                    
\end{equation}
It follows that $(f, g, g, f)^v= (f, g, g, f).$  Then $v \in  H'$ because the centralizer of any non-trivial element of the commutant $M_r'$ of $M_r$ is equal to $M_r'$. This statement was proved by Mal'cev in  \cite{M}. 
  
Define an endomorphism of $M_r$ by the map
\begin{equation}
\label{eq:8}
\beta: z_i  \mapsto h_i^{v^{-1}}, i =1, ..., r.
\end{equation} 
The image $\beta (M_r)$ lies in  $H.$  Moreover, 
$$\beta (g) = \beta (g (z_1,..., z_r)) = g (h_1,... ,h_r)^{v^{-1}} = g,$$
\begin{equation}
\label{eq:9}  
\beta (f) = \beta(f (z_1,..., z_r)) = f (h_1,..., h_r)^{v^{-1}} = f.
\end{equation}                  
Thus $\beta$ is identical on $H$ and so $\beta $ is a retraction and $H$ is a retract. 

Now let  $(\epsilon , \eta) \not= (1, 1).$ As above the following map 
\begin{equation}
\label{eq:10}
 g^{\eta} \mapsto g (h_1,..., h_r), f ^{\epsilon} \mapsto  f (h_1,..., h_r)                                                        
\end{equation}
\noindent
defines an automorphism of $H.$  A composition of this automorphism with itself is identical $(mod \ H')$ and we can finish our proof as above.

Let $G = M_{rk}.$ This group is free in a nilpotent variety ${\cal N}$. Then a pair of elements  $g, f \in H$ which induce a pair of free generators $\bar{g}, \bar{f}$ in $A_r=M_{rk}/M_{rk}'$, generate a free ${\cal N}$-factor in $M_{rk}$. This factor is obviously a retract of $M_{rk}.$  
\end{proof}

\section{Description of verbally and algebraically  closed two-generated subgroups of  free solvable groups}
\label{se:5}

To prove the main result of this section in the case of group $S_{r3}$ we will use a concrete example of a test element  of    $S_{23}$, that was constructed in \cite{Romtest}. The proof in the general case of group $S_{rd}$ will be slightly different.

\begin{example}
\label{ex:5.1}
 Let $x, y$ be a basis of $S_{23}.$  For any pair of positive integers $k$ and $l$  we denote by  $z(k; l)(x, y)$ the element  $(y, x; k, y; l-1)$, and by      $w(k; l; m; n)(x, y)$ we denote the element $(z(k; l)(x,y), z(m; n)(x,y)). $  Then
\begin{equation}
\label{eq:11}
u(x, y)  = w(3; 2; 1; 1)(x, y) w(2; 2; 1; 2)(x, y)                                               
\end{equation}
\noindent
is a test element of $S_{23}$.
\end{example}

The following lemma was proved in \cite{Romtest}. 

\begin{lemma}
\label{le:5.2}
Let $\phi $ be an endomorphism of $S_{23}$ for which $\phi (u) \equiv u (mod\ \gamma_8S_{23})$ where $u$ is the element in Example \ref{ex:5.1}. Then $\phi $ is an automorphism identical modulo $S_{23}'.$  In this case $\phi $ is inner by  theorem proved in \cite{BFM} (see also \cite{Romnormal}). 
\end{lemma}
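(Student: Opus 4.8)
The plan is to read off the action of $\phi$ on the top nonzero term of $u$ and then to bootstrap this to the full statement. Since $u$ is a product of two commutators of weight $7$, we have $u\in\gamma_7 S_{23}\setminus\gamma_8 S_{23}$, and every endomorphism respects the lower central filtration. Because conjugation acts trivially on $\gamma_7 S_{23}/\gamma_8 S_{23}$, this quotient is the degree-$7$ homogeneous component $L_7$ of the associated graded Lie ring of $S_{23}$, a finitely generated free abelian group, and $\phi$ acts on it solely through the induced linear map $M=\bar\phi$ on $A_2=S_{23}/S_{23}'$, extended as a graded Lie ring endomorphism. Writing $M$ as the integer matrix sending $x\mapsto ax+by$, $y\mapsto cx+dy$, the hypothesis $\phi(u)\equiv u \ (\mathrm{mod}\ \gamma_8 S_{23})$ is equivalent to the fixed-point equation $\mathrm{gr}_7(\phi)(\bar u)=\bar u$ in $L_7$, which unwinds into a system of homogeneous polynomial equations of degree $7$ in $a,b,c,d$.

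First I would write out the leading term explicitly: modulo $\gamma_8 S_{23}$ the element $u$ is the sum of the Lie monomials $[[y,x,x,x,y],[y,x]]$ and $[[y,x,x,y],[y,x,y]]$, of multidegrees $(4,3)$ and $(3,4)$ in $(x,y)$. Substituting $x\mapsto ax+by$, $y\mapsto cx+dy$, expanding by multilinearity in every bracket slot, and comparing multidegree components in a Hall basis of $L_7$, one sees that the asymmetric design of $u$ rigidifies the stabiliser: the odd total degree $7$ already excludes $M=-I$; the two summands carry complementary multidegrees with genuinely different internal bracket shapes, so the coordinate swap and the other unimodular matrices fail to fix $\bar u$; and the components with a single $y$ (respectively a single $x$) pin the off-diagonal entries to zero and then the diagonal entries to one. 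Thus $M=I$, so $\phi$ is identical modulo $S_{23}'$.

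The main obstacle is the remaining step, which is genuinely not formal: an endomorphism of a free solvable group that is identical modulo the derived subgroup need not be an automorphism, as the Bachmuth--Fox Jacobian criterion already shows over the metabelian quotient. Hence $M=I$ by itself does not suffice, and the congruence modulo $\gamma_8 S_{23}$ must be exploited more finely. Here I would use that $u\in S_{23}^{(2)}$ together with the Magnus embedding: read on the module $S_{23}^{(2)}$ over $\mathbb{Z}[A_2]$, the condition $\phi(u)\equiv u \ (\mathrm{mod}\ \gamma_8 S_{23})$ is designed (this is the point of the specific parameters $(3;2;1;1)$ and $(2;2;1;2)$ and of the test-element property recorded in Example~\ref{ex:5.1}) to pin the Fox Jacobian of $\phi$ to a matrix invertible over $\mathbb{Z}[A_2]$, which yields surjectivity of $\phi$. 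Injectivity is free: since $S_{23}$ is residually nilpotent and $\phi$ induces on each finitely generated nilpotent quotient $S_{23}/\gamma_n S_{23}$ a map that is the identity on the abelianization, each such map is surjective and hence an automorphism, so $\ker\phi\subseteq\bigcap_n\gamma_n S_{23}=1$. Therefore $\phi$ is an automorphism, identical modulo $S_{23}'$, and by the theorem of \cite{BFM} (see also \cite{Romnormal}) every such automorphism of $S_{23}$ is inner, completing the proof.
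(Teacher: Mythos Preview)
The paper does not give its own proof of Lemma~\ref{le:5.2}; it simply cites \cite{Romtest}. So the relevant question is whether your sketch is internally sound.

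Your first step---reading the hypothesis in the associated graded Lie ring and arguing that the induced matrix $M=\bar\phi$ on $A_2$ must be the identity---is the right idea, and your identification of the leading term of $u$ and its bidegrees is correct (modulo the omitted computation in a Hall basis).

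The genuine gap is in your second step, and your own diagnosis of it is more accurate than your proposed fix. You observe that $\phi$ acts on $\gamma_7S_{23}/\gamma_8S_{23}$ only through $M$. But this cuts both ways: once $M=I$, \emph{every} IA-endomorphism $\phi$ automatically satisfies $\phi(u)\equiv u\ (\mathrm{mod}\ \gamma_8S_{23})$ for every $u\in\gamma_7S_{23}$. Hence the congruence modulo $\gamma_8$ carries no information whatsoever beyond $M=I$; in particular it cannot ``pin the Fox Jacobian of $\phi$ to a matrix invertible over $\mathbb{Z}[A_2]$'' as you claim. There is nothing left in the hypothesis to exploit ``more finely'' in the Magnus embedding---the hypothesis is literally exhausted by $M=I$. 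So your surjectivity paragraph does not go through as written: you are implicitly asserting that every IA-endomorphism of $S_{23}$ is an automorphism, and that is a separate (nontrivial) statement which you have not proved and which is exactly what the cited paper \cite{Romtest} supplies. Your injectivity argument via residual nilpotence is fine, but injectivity was never the issue.
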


The following more general result was proved in \cite{Timsolv}

\begin{lemma}
\label{le:5.3}
Let $S_{2d}$ be a free solvable group of rank $2$ and class $d \geq 2$ with basis $z_1, z_2,$  and let $v \in S_{2d}^{(d-1)}, v \not= 1.$ Then there is a positive number $m$ such that element $u = v^{(1-z_1^m)(1-z_2^m)}$ is a test element. Moreover, for every automorphism $\phi$ if $\phi (u) = u$ then  $\varphi = \phi^2 $ is an inner automorphism.
\end{lemma}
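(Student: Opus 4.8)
The plan is to convert the fixed-point condition $\phi(u)=u$ into a single equation in a group ring and then to read off from it, first, that $\phi$ must be onto (the test-element property) and, afterwards, that $\phi^{2}$ acts trivially on the abelianization (the inner-automorphism property).

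First I would set up the module reduction. Write $N=S_{2d}^{(d-1)}$, which is abelian, and $Q=S_{2d}/N\cong S_{2(d-1)}$. Conjugation makes $N$ a module over $R=\mathbb{Z}[Q]$; since $Q$ is a torsion-free solvable group, $R$ is a domain, and by the structure theory of free solvable groups $N$ has no module torsion (\cite{M}, \cite{Romess}). Any endomorphism $\phi$ preserves $N$ and induces a ring endomorphism $\bar\phi$ of $R$ coming from its action on $Q$, and $\phi|_{N}$ is $\bar\phi$-semilinear. Writing $a=\bar\phi(z_1)$, $b=\bar\phi(z_2)\in Q$ and $v'=\phi(v)$, the equality $\phi(u)=u$ reads
\[
v'\cdot(1-a^{m})(1-b^{m})=v\cdot(1-z_1^{m})(1-z_2^{m})\qquad\text{in }N .
\]
Using that $N$ is cyclic over $R$ with distinguished generator $c$ (the deepest commutator of $z_1,z_2$), I write $v=c\cdot\rho$ and $\phi(c)=c\cdot\sigma$ with $\rho,\sigma\in R$, $\rho\neq 0$; torsion-freeness then turns the displayed identity into the ring equation
\[
\sigma\,\bar\phi(\rho)\,(1-a^{m})(1-b^{m})=\rho\,(1-z_1^{m})(1-z_2^{m}).
\]

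To obtain the test-element property I would project this equation along the augmentation $R=\mathbb{Z}[Q]\to\mathbb{Z}[A_2]=\mathbb{Z}[s^{\pm1},t^{\pm1}]$ into a commutative unique factorization domain. There $a,b$ become the monomials recorded by the abelianization matrix $J$ of $\phi$, and $\sigma$ maps to a unit multiple of the Fox–Jacobian determinant of $\phi$ (\cite{Romess}). A Newton-polygon comparison (the Newton polygon of a product being the Minkowski sum of the factors'), carried out with $m$ taken large relative to the bounded polygon of $\rho$ so that the two squares $m\cdot[0,1]^{2}$ dominate, should force the columns of $J$ to be $\pm e_1,\pm e_2$ in some order and, decisively, force the Jacobian determinant to collapse to a single monomial, i.e.\ to a unit of $\mathbb{Z}[A_2]$. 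Invertibility of the Jacobian forces $\phi$ to be surjective, hence an automorphism (by the criterion used for $S_{23}$ in Lemma~\ref{le:5.2}); thus every endomorphism fixing $u$ is an automorphism.

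The harder, and for me the main, step is the assertion that $\phi^{2}$ is inner. The analysis above leaves $J$ only in the monomial subgroup of $GL_2(\mathbb{Z})$, and its square is diagonal with entries $\pm1$. The coordinate swap $z_1\leftrightarrow z_2$ is symmetric in the factor $(1-z_1^{m})(1-z_2^{m})$ and genuinely survives, which is exactly why one can claim inner only for $\phi^{2}$ and not for $\phi$; but the order-four "rotations" of $A_2$ must still be excluded, and the projected commutative argument cannot distinguish a rotation from a swap (both match the binomial product up to associates). I expect the exclusion of rotations to require working with the non-commutative ring $R=\mathbb{Z}[Q]$ and the exact exponent/sign bookkeeping in the ring equation, and this is precisely where the quantitative choice of $m$ promised by the statement is spent; I would also first have to secure the supporting structural facts — that $N$ is torsion-free cyclic and that $\phi(c)=c\cdot\sigma$ with $\sigma$ recording the Jacobian determinant — via the Magnus embedding of $Q$ and the derived-module description in \cite{Romess}. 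Granting that one can choose $m$ so that no automorphism fixing $u$ induces a rotation, one gets $J^{2}=I$, so $\phi^{2}$ is identical modulo $S_{2d}'$, and then $\phi^{2}$ is inner by the theorem quoted in Lemma~\ref{le:5.2} (\cite{BFM}, see also \cite{Romnormal}), completing the proof.
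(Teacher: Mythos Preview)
The paper does not supply its own proof of Lemma~\ref{le:5.3}; the result is quoted from \cite{Timsolv}, so there is no in-paper argument against which to compare yours.

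On its own merits, your outline follows the expected route for statements of this type: pass to $N=S_{2d}^{(d-1)}$ as a module over $R=\mathbb{Z}[Q]$ with $Q=S_{2(d-1)}$, rewrite $\phi(u)=u$ as a group-ring identity, project to the commutative Laurent ring $\mathbb{Z}[A_{2}]$, and run a Newton-polygon comparison with $m$ chosen large relative to the support of $v$ so that the abelianized Jacobian of $\phi$ is forced to be a signed permutation matrix and hence a unit, giving surjectivity. Two genuine gaps remain, however. First, the proposal is openly incomplete at the decisive step: you yourself write ``granting that one can choose $m$ so that no automorphism fixing $u$ induces a rotation'', but that exclusion \emph{is} the substantive content supplied by \cite{Timsolv}. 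Without it your argument yields only that $J(\phi)$ is a signed permutation matrix, so $J(\phi^{2})\in\{\pm I\}$; the rotation case gives $J(\phi^{2})=-I$, and then the appeal to \cite{BFM}/\cite{Romnormal} (which needs $\phi^{2}$ identical on $S_{2d}/S_{2d}'$) does not apply. Second, your reduction writes an arbitrary $v\in N$ as $c\cdot\rho$ for a single distinguished generator $c$, i.e.\ tacitly assumes that $N$ is a cyclic $R$-module. This is correct for $d=2$ (then $Q=A_{2}$ and $N=M_{2}'$ is generated by $[z_{1},z_{2}]$), but for $d\ge 3$ the quotient $Q$ is non-abelian and the relation-module description identifies $N$ with the syzygy submodule $\{(\alpha,\beta)\in R^{2}:\alpha(z_{1}-1)+\beta(z_{2}-1)=0\}$, whose cyclicity is not available; the argument has to be carried out with both Fox coordinates of $v$ in $R^{2}$ rather than with a single scalar $\rho$.
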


\begin{theorem}
\label{th:5.4}
Let $H = gp(g, f)$ be a two-generated subgroup of  $S_{rd}, r, d \geq 2.$ Then the following conditions are equivalent:
\begin{enumerate}
\item [1)] $H$ is a retract of $ S_{rd}$.
\item [2)] $H$ is a algebraically closed  subgroup of $ S_{rd}$.
\item [3)] $H$ is an verbally closed subgroup of $S_{rd}$.
\end{enumerate}
\end{theorem}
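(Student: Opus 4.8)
The plan is to prove the only nontrivial implication $3)\rightarrow 1)$, since $1)\rightarrow 2)\rightarrow 3)$ are immediate from Proposition \ref{pr:2.1} (a retract is algebraically closed, and algebraic closure trivially implies verbal closure). So suppose $H=gp(g,f)$ is verbally closed in $S_{rd}$. If $H$ is cyclic, then Lemma \ref{le:cycret} already tells us that verbal closure forces $H$ to be a retract, and there is nothing more to do. Hence I assume $H$ is noncyclic. By Lemma \ref{le:2.3} the image $\bar H$ in the abelianization $A_r=S_{rd}/S_{rd}'$ is verbally closed, and by Lemma \ref{le:2.4} it is a rank-$2$ direct factor of $A_r$ while $H$ itself is free solvable of rank $2$ and class $d$ with basis $g,f$. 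Since $\bar H$ is a direct factor, its basis $\bar g,\bar f$ extends to a basis of $A_r$, and composing with the corresponding automorphism of $S_{rd}$ I may assume $g\equiv z_1$ and $f\equiv z_2\pmod{S_{rd}'}$.

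The central idea is to encode the fixing of a test element into a \emph{single} verbal equation, exactly as in the proof of Theorem \ref{th:4.1}. Applying Lemma \ref{le:5.3} to $H\cong S_{2d}$ (with $g,f$ playing the role of the basis), I choose $v\in H^{(d-1)}$, $v\neq 1$, and an integer $m$ so that $u=v^{(1-g^m)(1-f^m)}$ is a test element of $H$. Since $u\in H\leq S_{rd}$, I write it as a word $u(z_1,\ldots,z_r)$ in the basis of $S_{rd}$ and replace each $z_i$ by a variable $x_i$ to form the equation $u(x_1,\ldots,x_r)=u$. This is a genuine verbal equation with constant $u\in H$, and it is solvable in $S_{rd}$ by the obvious substitution $x_i=z_i$; hence by verbal closure it has a solution $h_1,\ldots,h_r\in H$, so that $u(h_1,\ldots,h_r)=u$.

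Now I define the endomorphism $\beta\colon S_{rd}\to H$ by $z_i\mapsto h_i$. Then $\beta(u)=u(h_1,\ldots,h_r)=u$, so the restriction $\psi=\beta|_H$ fixes the test element $u$, and by the defining property of a test element $\psi$ is an automorphism of $H$. To finish there are two routes. The direct one is to put $\rho=\psi^{-1}\circ\beta\colon S_{rd}\to H$; for $x\in H$ one gets $\rho(x)=\psi^{-1}(\psi(x))=x$, so $\rho$ is a retraction and $H$ is a retract. Alternatively, to stay parallel to Theorem \ref{th:4.1} and use the full strength of Lemma \ref{le:5.3}, I invoke that $\psi^2$ is inner, say conjugation by $w\in H$, and adjust $\beta^2$ by $w^{-1}$ to obtain an endomorphism of $S_{rd}$ into $H$ that fixes both $g$ and $f$, hence a retraction.

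The genuine difficulty is entirely absorbed into Lemma \ref{le:5.3}: the existence in every $S_{2d}$ of a test element of the prescribed form with the $\psi^2$-inner property; granting that, the argument is routine. Within the proof itself the one delicate point is that a single word-equals-constant equation suffices, and this is precisely where I exploit that $u$ is simultaneously a concrete word in $z_1,\ldots,z_r$ (making ``$u=u$'' a legitimate verbal equation) and a test element of $H$ (making any endomorphism that fixes it an automorphism). The base case $d=2$ is the metabelian one already handled in Theorem \ref{th:4.1}, and for $d\geq 3$ the test element from Lemma \ref{le:5.3} plays the role that ``every nontrivial element of $M_2'$ is a test element'' played there.
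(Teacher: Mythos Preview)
Your argument is correct and follows the same overall strategy as the paper: write the test element $u$ of $H\cong S_{2d}$ (supplied by Lemma~\ref{le:5.3}) as a word in the ambient basis $z_1,\ldots,z_r$, solve the resulting verbal equation inside $H$, and interpret the solution as an endomorphism $\beta\colon S_{rd}\to H$ whose restriction $\psi=\beta|_H$ fixes $u$ and is therefore an automorphism of $H$.

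Where you diverge is in the last step. The paper converts $\psi$ into a retraction by invoking the second clause of Lemma~\ref{le:5.3} ($\psi^2$ is inner), then using Mal'cev's centralizer result to show the conjugating element lies in $H^{(d-1)}$, and finally conjugating $\beta$ (more precisely its square) to obtain a map fixing $g$ and $f$ individually. Your first route short-circuits all of this: once $\psi\in\mathrm{Aut}(H)$, the composite $\rho=\psi^{-1}\circ\beta$ is already a homomorphism $S_{rd}\to H$ that restricts to the identity on $H$, hence a retraction. This is cleaner and uses only the test-element property of $u$, not the ``$\psi^2$ inner'' refinement or the centralizer computation. Your second (alternative) route is essentially the paper's argument. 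The normalization $g\equiv z_1,\ f\equiv z_2\pmod{S_{rd}'}$ that you set up is harmless but, as you may have noticed, your argument never actually uses it; the paper makes the same reduction and likewise does not exploit it beyond framing.
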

\begin{proof} The implications  1) $\rightarrow$ 2)  $\rightarrow$ 3) are obvious. We are to prove that 3) $\rightarrow$ 1). 

Case of group $S_{r3}.$ Let $z_1, ..., z_r$ be a basis of $S_{r3}$. By Theorem \ref{th:4.1} and Lemmas \ref{le:2.2} and \ref{le:2.3} we can assume that $g = vz_1, v \in S_{r3}',$ and   $f = wz_2, w \in  S_{r3}'$.    Let $g (z_1, ..., z_r)$ and   $f (z_1, ..., z_r)$ be two expressions of $g$ and $f$ respectively. Let $u$ be the element described by Example \ref{ex:5.1}. Then equation
\begin{equation}
\label{eq:12}
u(g(x_1, ..., x_r), f (x_1, ..., x_r)) = u(g, f)
\end{equation} 
\noindent
has a solution in $S_{r3}$. Hence it has a solution $h_1,... , h_r$ in $H. $ The map $ z_i  \mapsto h_i, i = 1, ..., r,$ defines an  endomorphism $S_{r3}$ with the image in $H.$ It maps $g = g(z_1, ..., z_r)$ to $g(h_1, ..., h_r),$ and  $f = f(z_1,  ..., z_r) $ to $f(h_1, ..., h_r).$
By Lemma \ref{le:5.2} the map $g \mapsto g(h_1, ..., h_r), f \mapsto  f(h_1,..., h_r)$ defines an inner automorphism of  $H.$   Then there is $ t \in  H$ for which 
\begin{equation}
\label{eq:13}
g (h_1, ..., h_r) = g^t,   f (h_1, ..., h_r) = f^t.                                                                
\end{equation}

It follows that $u(g,f)^t= u(g, f).$  In any group $S_{rd}, r, d \geq 2, $ the centralizer of any non-trivial element $y \in S_{rd}^{(d-1)}$ coincides with $S_{rd}^{(d-1)}$ \cite{M}. Hence  $t \in  H^{(2)}.$ Let $\phi$ be an endomorphism of $S_{r3}$ defined by the map: $z_i  \mapsto h_i^{t^{-1}}, i = 1, ..., r.$ The image $\phi (S_{r3})$ lies in  $H$, and 
\begin{equation}
\label{eq:14}
\phi (g) = \phi (g (z_1,…,z_r)) = g(h_1,…,h_r) ^{t^{-1}} = g, \phi (f) = \phi (f (z_1,…,z_r))  
(f (h_1,…,h_r))^{t^{-1}} = f.
\end{equation}
Hence $\phi$ is a retraction, and $H$ is a retract.

Case of group $S_{rd}, d \geq 2$. Now $u$ is defined in Lemma \ref{le:5.3}. We repeat all the arguments as above until we get the map $g \mapsto g(h_1, ..., h_r), f \mapsto  f(h_1,..., h_r)$ which defines an automorphism $\phi$ of $H.$  Then $\phi (S_{rd}) \subseteq H.$  By Lemma \ref{le:5.3} $\phi^2$ is an inner automorphism of  $H.$ For some $t\in H$ we have equalities (\ref{eq:13}). Then we finish proof as in the previous case. 

\end{proof}

\section{Open problems}
\label{se:problems}
\begin{problem}
\label{pr:6.1}
Is it true that for any subgroup $H$ of any free solvable group $S_{rd}$ of rank $r \geq 2$ and class $d \geq 2$ the following conditions are equivalent: 
\begin{enumerate}
\item [1)] $H$ is a retract of $ S_{rd}$,
\item [2)] $H$ is a algebraically closed  subgroup of $ S_{rd}$,
\item [3)] $H$ is an verbally closed subgroup of $S_{rd}$?
\end{enumerate}
\end{problem}
It is likely that the answer to this question is negative. By Timoshenko's theorem (see \cite{Timsolv} or \cite{Timbook}) the test rank of $S_{rd}, r, d \geq 2,$ is $r - 1.$ Recall, that {\em test rank} of a group $G$ is the minimal number of elements of $G$ such that every endomorphism fixing any of this elements is automorphism.  In the general case $H$ does not contain a test element. Hence, the methods of this paper do not work completely.

\begin{problem}
\label{pr:6.2}
Is it true that for any two-generated subgroup $H$ of any free polynilpotent group $P$ of rank $r \geq 2$ and class $(1, c_1, ..., c_l), l \geq 1,$ the following conditions are equivalent: 
\begin{enumerate}
\item [1)] $H$ is a retract of $ P$,
\item [2)] $H$ is a algebraically closed  subgroup of $ P$,
\item [3)] $H$ is an verbally closed subgroup of $P$?
\end{enumerate}
\end{problem}
By Timoshenko's theorem \cite{Timsolv} (see also \cite{Timbook}) any free solvable group $S_{2d}, d \geq 2,$ contains test elements, all belong to $S_{rd}^{(d-1)}.$ For the affimative solution  to this problem  sufficient to prove that  any $S_{2d}$ contains a test element  with properties as in Lemma \ref{le:5.2}. Note that the reference to the statement in \cite{BFM} (or \cite{Romnormal}) in Lemma \ref{le:5.2} 
works in the general case. Namely, any automorphism of   $S_{2d}, d \geq 2$ identical $mod\ S_{2d}', d \geq 2,$ is inner. 
 
Note, that by Gupta and the second author's  theorem \cite{Timpol} (see also \cite{Timbook}) any free polynilpotent group of class $(1, c_1, ..., c_l)$ for $ l \geq 1$ has a test element. We need in analog of Lemma \ref{le:2.4} to solve this problem affirnatively.


\begin{thebibliography}{\hspace{0.5in}}


\bibitem{MR} A. Myasnikov, V. Roman'kov. Verbally closed subgroups of free groups. J. Group Theory. 2014. 17, 29-40.

\bibitem{BMRom}  G. Baumslag,  A. Myasnikov and  V.  Roman'kov.  Two theorems about
equationally Noetherian groups. J. Algebra. 1997.  194, 654-664.

\bibitem{BMR1} G. Baumslag, A. Myasnikov and V. Remeslennikov. Algebraic geometry
over groups I. Algebraic sets and ideal theory. J. Algebra. 1999.  219, 16-79.

\bibitem{Ore}  O. Ore.  Some remarks on commutators.  Proc. Amer. Math. Soc. 1951.   2, 307-314.

\bibitem{LBST} M. Liebeck, E. O'Brian,   A.  Shalev A.  and  P.  Tiep.  The Ore conjecture.  J. European Math. Soc. 2010.  12,  939-1008.

\bibitem{Romeq} V. Roman'kov. Equations over groups. Groups, Complexity, Cryptology. 2012. 4, No. 2, 191-239. 

\bibitem{Segal}  D. Segal.  Words: notes on verbal width in groups. London Math. Soc. Lect. Notes Ser.  361. Cambridge: Cambridge Univ. Press, 2009. 134 p.

\bibitem{Romess} V.A. Roman’kov. Essays in algebra and cryptology. Solvable groups. Omsk: Omsk State University Publishing House, 2017. 207 p.

\bibitem{RH} V.A. Roman'kov, N.G. Khisamiev. Verbally and existentially closed subgroups of free nilpotent groups. Algebra and Logic. 2013. 52, No. 4, 336-351.

\bibitem{Romeqmet} V.A. Roman'kov. Equations in free metabelian groups. Siberian Mathematical Journal. 1979. 20, No. 3, 469-471. 

\bibitem{B} G. Baumslag.  Some subgroup theorems for free ${\upsilon}$-groups. Trans. Amer. Math. Soc. 1963.  108. 516–525.

\bibitem{M} A. I. Malcev.  On free soluble groups. Soviet Math. Dokl. 1960. 1, 65-68.


\bibitem{Timmet} E.I. Timoshenko. Test elements and test rank of a free metabelian group.  
Siberian Mathematical Journal. 2000.  41. No. 6. 1200-1204.

\bibitem{Bachmuth} S. Bachmuth. Automorphisms of free metabelian groups. Trans. Amer. Math. Soc. 1965. 118, 93-104.

\bibitem{Romtest} V.A. Roman'kov. Test elements for free solvable groups of rank 2. Algebra and Logic. 2001. 40, No. 2, 106-111. 

\bibitem{BFM} S. Bachmuth, E. Formanek, H.Y. Mochizuki. IA-automorphisms of two-generated torsion free groups.  J. Algebra. 1976.  40, 19-30. 

\bibitem{Romnormal} V.A. Roman'kov. Normal automorphisms of discrete groups. Siberian Mathematical Journal. 1983. 24, No. 4, 604-614.

\bibitem{Timsolv} E.I. Timoshenko. Computing test rank for a free solvable group. Algebra and Logic. 2006. 45, No. 4, 254-260.

\bibitem{Timbook} E.I. Timoshenko. Endomorphisms and universal theories of solvable groups (in Russian). Novosibirsk: NSTU Publishers. 2011. 327 p. ("NSTU Monograph" Series). 

\bibitem{Timpol} C.K. Gupta, E.I. Timoshenko. Test rank for some free polynilpotent groups. Algebra and logic. 2003. 42, 20-27. 
\end{thebibliography}
\end{document}